\newtheorem{theorem}{Theorem}[section]
\newtheorem{corollary}[theorem]{Corollary}
\newtheorem{question}[theorem]{Question}
\newtheorem{proposition}[theorem]{Proposition}
\newtheorem{lemma}[theorem]{Lemma}
\newtheorem{lem}[theorem]{}
\theoremstyle{definition}
\newtheorem{definition}[theorem]{Definition}
\theoremstyle{remark}
\newtheorem{remark}[theorem]{Remark}
\newtheorem{example}[theorem]{Example}
\newcommand{\blem}{\begin{lem} \rm}
\newcommand{\elem}{\end{lem}}
\newcommand{\R}{\mathbb{R}}
\newcommand{\C}{\mathbb{C}}
\newcommand{\Z}{\mathbb{Z}}
\newcommand{\Q}{\mathbb{Q}}
\renewcommand{\P}{\mathbb{P}}
\newcommand\lie[1]{\mathfrak{#1}}
\newcommand{\g}{\lie{g}}
\renewcommand{\t}{\lie{t}}
\newcommand{\on}{\operatorname}
\newcommand{\orb}{\on{orb}}
\newcommand{\mfd}{\on{mfd}}
\newcommand{\dual}{\vee}
\newcommand{\Aut}{ \on{Aut} }
\newcommand{\ssm}{-}
\newcommand\dirac{/\kern-1.2ex\partial} 
\newcommand\qu{/\kern-.7ex/} 
\newcommand\lqu{\backslash \kern-.7ex \backslash} 
\newcommand\dr{r_+ \kern-.7ex - \kern-.7ex r_-}
\newcommand{\labell}\label
\renewcommand{\d}{{\on{d}}}
\newcommand{\ol}{\overline}
\newcommand\Phinv{\Phi^{-1}}
\newcommand\eps{\epsilon}
\newcommand{\lan}{\langle}
\newcommand{\ran}{\rangle}
\newcommand{\ti}{\tilde}
\newcommand\bra[1]{ < \kern-.7ex {#1} \kern-.7ex >} 
\newcommand\bdefn{\begin{definition}}
\newcommand\edefn{\end{definition}}
\newcommand\bea{\begin{eqnarray*}}
\newcommand\eea{\end{eqnarray*}}
\newcommand\bcv{\left[ \begin{array}{r} }
\newcommand\ecv{\end{array} \right] }
\newcommand\bma{\left[ \begin{array}{l} }
\newcommand\ema{\end{array} \right]}
\newcommand\ben{\begin{enumerate}}
\newcommand\een{\end{enumerate}}
\newcommand\beq{\begin{equation}}
\newcommand\eeq{\end{equation}}
\newcommand\bex{\begin{example}}
\newcommand\bsj{\left\{ \begin{array}{rrr} }
\newcommand\esj{\end{array} \right\}}
\newcommand\eex{\end{example}}
\newcommand\sx{*\kern-.5ex_X}
\def\mathunderaccent#1{\let\theaccent#1\mathpalette\putaccentunder}
\def\putaccentunder#1#2{\oalign{$#1#2$\crcr\hidewidth \vbox
to.2ex{\hbox{$#1\theaccent{}$}\vss}\hidewidth}}
\begin{document}

\title[Quasimap Floer cohomology for varying symplectic quotients]{Quasimap Floer
  cohomology for varying symplectic quotients}

\author{Glen Wilson} 

\address{Mathematics-Hill Center,
Rutgers University, 110 Frelinghuysen Road, Piscataway, NJ 08854-8019,
U.S.A.}  \email{glenmatthewwilson@gmail.com}

\author{Christopher T. Woodward}

\thanks{Partially supported by NSF
 grant DMS0904358 and the Simons Center for Geometry and Physics}


\address{Mathematics-Hill Center,
Rutgers University, 110 Frelinghuysen Road, Piscataway, NJ 08854-8019,
U.S.A.}  \email{ctw@math.rutgers.edu}

\begin{abstract}   
We show that quasimap Floer cohomology for varying symplectic
quotients resolves several puzzles regarding displaceability of toric
moment fibers.  For example, we (i) present a compact Hamiltonian
torus action containing an {\em open} subset of non-displaceable
orbits and a codimension four singular set, partly answering a
question of McDuff, and (ii) determine displaceability for most of the
moment fibers of a symplectic ellipsoid.  
\end{abstract}

\maketitle

\tableofcontents

\section{Introduction} 

{\em Quasimap Floer cohomology}, constructed in \cite{wo:gdisk}, is an
obstruction to Hamiltonian displaceability of an invariant Lagrangian
submanifold in the zero level set of a moment map for the action of a
Lie group by an invariant time-dependent Hamiltonian.  The
differential for quasimap Floer cohomology counts orbits of the group
on the space of holomorphic disks with boundary in the Lagrangian.
Since holomorphic disks ``upstairs'' often have better properties than
holomorphic disks in the symplectic quotient, quasimap Floer
cohomology is a sometimes-better-behaved substitute for Floer
cohomology in the quotient.

Here we restrict to the case of toric varieties.  That is, the group
$G \subset U(1)^N$ is a torus and the symplectic manifold $Y \cong
\C^N$ is a Hermitian vector space.  The group $G$ acts in Hamiltonian
fashion on $Y$ with quadratic moment map $\Psi: Y \to \g^\dual$.  The
symplectic quotient $X = Y \qu G := \Psi^{-1}(0)/G$ is a possibly
singular toric manifold with action of the torus $T = U(1)^N/G$ and a
moment map $\Phi: X \to \t^\dual$ induced from that of $U(1)^N$ on
$Y$.  By definition, a smooth function on $X$ is an equivalence class
of smooth $G$-invariant functions on $Y$, so that displaceability in
$X$ is equivalent to displaceability by a $G$-invariant Hamiltonian on
$Y$.  Non-displaceability results in the quotient $X$ are provided by
Floer-theoretic methods in Fukaya-Oh-Ohta-Ono \cite{fooo:toric1},
\cite{fooo:toric2}.  Quasimap Floer cohomology gives the following
result, which at first seems only slightly stronger.  We denote by
$v_1,\ldots,v_N \in \t$ the images of minus the standard basis vectors
$e_1,\ldots,e_N \in \R^N$.  The moment polytope $\Phi(X)$ is the set
of points satisfying linear inequalities
\begin{equation} \label{lineq} 
\Phi(X) = \{ \lambda \in \t^\dual \ | \ l_i(\lambda) \ge 0 \} , \quad
l_i(\lambda)/2\pi := \langle \lambda, v_i \rangle -\eps_i, \quad i =
1,\ldots, N \end{equation}
where $\lan \cdot, \cdot \ran: \t^\dual \times \t \to \R$ is the
canonical pairing and $\eps_1,\ldots,\eps_N$ are constants given by
the choice of moment map.  This list of inequalities may not be
minimal, that is, any particular inequality may or may not define a
facet of $\Phi(X)$.  Let $\Lambda$ be the {\em universal Novikov
  field} consisting of possibly infinite sums of real powers of a
formal variable $q$,
$$ \Lambda = \left\{ \sum_{n =0}^\infty c_n q^{d_n} , \quad c_n \in
\C, d_n \in \R, \quad \lim_{n \to \infty} d_n = \infty \right\} .$$
Let $\Lambda_0$ denote the subring consisting of sums with only
non-negative powers.  Any fiber $L_\lambda = \Phi^{-1}(\lambda)$ over
an interior point $\lambda \in \on{int}(\Phi(X))$ is a Lagrangian
torus in $X$, namely a single free $T$-orbit, and has inverse image
$\ti{L}_\lambda$ in $\Psi^{-1}(0)$ a $U(1)^N$-orbit in $Y$.  We
identify
$H^1(L_\lambda,\Lambda_0) \cong H^1(T,\Lambda_0)^T \cong \t^\dual \otimes \Lambda_0.$
In particular for any $v \in \t$ and $b \in H^1(L_\lambda,\Lambda_0)$
we have a pairing $\lan v, b \ran \in \Lambda_0$ and an exponential
$e^{\lan v, b \ran} \in \Lambda_0$.  Choose 
$\delta = (\delta_1,\ldots,\delta_N) \in \Lambda_0^N$.  The {\em
  bulk-deformed potential} (introduced in \cite[Theorem 3]{giv:hom})
is
\begin{equation} \label{hv}
 W_{\lambda,\delta}: H^1(L_\lambda,\Lambda_0) \to \Lambda_0, \quad b
 \mapsto \sum_{i=1}^N e^{\lan v_i, b \ran - \delta_i} q^{l_i(\lambda)} .\end{equation}

\begin{theorem} \label{main}   For any $\lambda \in \on{int}(\Phi(X)))$, 
if there exists $\delta \in \Lambda_0^N$ such that
$W_{\lambda,\delta}$ has a critical point, then $L_\lambda$ is
non-displaceable in $X$, or equivalently, $\ti{L}_\lambda$ is not
displaceable by any $G$-invariant time-dependent Hamiltonian $H \in
C^\infty([0,1] \times Y)^G$.
\end{theorem}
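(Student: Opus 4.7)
The plan is to apply the bulk-deformed quasimap Floer cohomology machinery of \cite{wo:gdisk}, following the non-displaceability framework of Fukaya-Oh-Ohta-Ono. First I would set up the bulk-deformed quasimap $A_\infty$ algebra associated to the $U(1)^N$-orbit $\ti{L}_\lambda \subset Y = \C^N$; its operations $\mathfrak{m}_k^{\delta}$ count stable holomorphic disks in $Y$ with boundary on $\ti{L}_\lambda$, weighted by exponential area and by intersection numbers with the toric divisors $D_i = \{z_i = 0\}$ encoded through $\delta = (\delta_1,\ldots,\delta_N) \in \Lambda_0^N$.

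Since $Y$ is a vector space and $\ti{L}_\lambda$ is a product torus, the Maslov index $2$ holomorphic disks are classified explicitly as the $N$ axis disks $u_i$ running in the $i$th coordinate direction; each meets $D_i$ in a single transverse point, and by \eqref{lineq} the symplectic area of $u_i$ is $l_i(\lambda)/2\pi$. Twisting by a local system $b \in H^1(L_\lambda, \Lambda_0)$ contributes a boundary holonomy factor $e^{\lan v_i, b \ran}$, while the bulk parameter contributes $e^{-\delta_i}$ from the intersection with $D_i$. Summing the contributions of the $u_i$ yields
$$\mathfrak{m}_0^{b,\delta} \;=\; W_{\lambda,\delta}(b) \cdot 1_L,$$
where $1_L$ is the strict unit coming from the fundamental cycle of $L_\lambda$.

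At a critical point $b$ of $W_{\lambda,\delta}$ the cochain $b$ is therefore weakly bounding; the deformed differential $\mathfrak{m}_1^{b,\delta}$ squares to zero and the deformed Floer cohomology $HF^*(\ti{L}_\lambda, b, \delta)$ is well-defined. A direct computation in the product-torus model, along the lines of Cho-Oh, identifies this deformed cohomology with $H^*(T;\Lambda)$, which is non-vanishing. By the continuation invariance of quasimap Floer cohomology proved in \cite{wo:gdisk}, any $G$-invariant time-dependent Hamiltonian $H \in C^\infty([0,1]\times Y)^G$ that displaces $\ti{L}_\lambda$ would force $HF^*(\ti{L}_\lambda, b, \delta) = 0$, contradicting non-vanishing, and Theorem \ref{main} follows.

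The technical crux is verifying that the identification $\mathfrak{m}_0^{b,\delta} = W_{\lambda,\delta}(b) \cdot 1_L$ remains valid after bulk deformation and in the presence of higher Maslov index and multiple-cover contributions. The simplification of the toric quasimap setup is decisive here: all holomorphic disks in $\C^N$ with boundary on a product torus are Blaschke products, so the higher contributions factor through products of the axis disks $u_i$, and the verification reduces to a combinatorial identity already essentially available in \cite{wo:gdisk}. The remaining issue is the behavior of quasimap moduli near the unstable locus in $Y$, which is controlled because the stability condition forces interior markings away from $\Psi^{-1}(0)^{\uns}$.
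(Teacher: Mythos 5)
Your proposal follows essentially the same route as the paper, which establishes this theorem by appealing to the quasimap Floer machinery of \cite[Theorem 7.1]{wo:gdisk}: classify the Maslov index two Blaschke-product disks upstairs in $Y$ with boundary on the product torus $\ti{L}_\lambda$, identify the resulting obstruction term with $W_{\lambda,\delta}(b)$ including the bulk factors $e^{-\delta_i}$, and deduce nonvanishing of the deformed quasimap Floer cohomology at a critical point, hence non-displaceability by $G$-invariant Hamiltonians. The only quibble is a small logical inversion: $b$ is weakly bounding for every $b$ (that is what the identification $\mathfrak{m}_0^{b,\delta} = W_{\lambda,\delta}(b)\cdot 1_L$ gives, independently of any criticality), and the critical-point hypothesis enters only in the next step, the computation showing that $\mathfrak{m}_1^{b,\delta}$ has nonzero cohomology.
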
 

This was proved in \cite{wo:gdisk} but the possibility that the
quotient is singular or that some of the inequalities do not define
facets of the polytope was not included in the main result.  Later we
realized the importance of the more general result: even for
understanding displaceability in open subsets of $\C^N$, the case of
singular or ``spurious'' inequalities is highly relevant.  The following
example shows the importance of singular quotients.

\begin{example}  (Non-displaceability in $\C^2$ by $\Z_2$-invariant
Hamiltonians) Let $\mu = (\mu_1,\mu_2) \in \R^2_{> 0}$, and $L = \{
(z_1,z_2) \in \C^2 \ | \ |z_1|^2 = \mu_1, |z_2|^2 = \mu_2 \}$.  The
group $\Z_2 = \{ \pm 1 \}$ acts diagonally on $\C^2$.  We claim that
$L \subset \C^2$ is displaceable by a $\Z_2$-invariant Hamiltonian iff
$\mu_1 \neq \mu_2$.  Indeed, $L$ is displaceable by a $\Z_2$-invariant
time-dependent Hamiltonian iff $L /\Z_2$ is displaceable in the
orbifold quotient $X = \C^2/\Z_2$.  The latter admits the structure of
a toric orbifold with moment polytope given by the span of the vectors
$(1,1),(-1,1)$, see Example \ref{c2z21} below and Figure
\ref{c2z2fig}.  The quotient $L/\Z_2$ is the moment fiber over
$(\lambda_1,\lambda_2) = (\mu_1 - \mu_2, \mu_1 + \mu_2)$.  The probes
of McDuff \cite{mc:di} (or the Hamiltonian action of $SU(2)$ on
$\C^2$) show that if $\lambda_1 \neq 0$, then $L$ is displaceable,
since any such $(\lambda_1,\lambda_2)$ is contained in a probe with
direction $(0,1)$.  Unfortunately quasimap Floer cohomology for
$\C^2/\Z_2$ does not give any non-displaceable fibers, since the
potential has no critical points.  We apply the following trick:
observe that $0 \in \C^2$ is fixed by the flow of any $\Z_2$-invariant
Hamiltonian $H$, since $dH(t,0) = 0 $ for all $t \in [0,1]$.  Consider
the singular symplectic quotient $\hat{X}$ obtained from $X \times \C$
by symplectic quotient by $S^1$, acting on $X$ with moment map
$|z_1|^2 + |z_2|^2$, that is, by symplectic cut of $X$ with respect to
the diagonal circle.  The space $\hat{X}$ has the same moment polytope
as $X$, but its realization as a symplectic quotient $Y \qu G$
involves a ``spurious inequality'' $\lambda_2 \ge 0$ as well as the
inequalities for $X$ given by $\lambda_1 + \lambda_2 \ge 0 , -
\lambda_1 + \lambda_2 \ge 0$.  A toric moment fiber for $X$ is
displaceable iff the corresponding toric moment fiber for $\hat{X}$ is
displaceable, since after applying a cutoff function we may assume
that $H$ vanishes near the singular locus and $X$ and $\hat{X}$ are
isomorphic away from the singular loci, see Proposition \ref{disp} below.
The bulk-deformed potential
$ q^{\lambda_1 + \lambda_2} e^{b_1 + b_2} + q^{- \lambda_1 +
  \lambda_2} e^{-b_1 + b_2} + q^{\lambda_2} e^{b_2 - \delta} $
has a critical point iff $\lambda_1 = b_1 = 0$ and $2e^{b_2} + e^{b_2
  - \delta} =0 $ or $e^{- \delta} = -2 $.  See Figure \ref{c2z2fig}.
A similar result for the deformation of $\C^2/\Z_2$ was studied in
Fukaya et al \cite{fukaya:toricdegen}.
\end{example} 

\begin{figure}[ht]
\includegraphics[height=.7in]{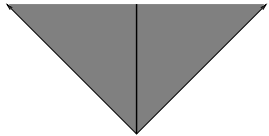}
\caption{Displaceable and non-displaceable fibers for $\C^2/\Z_2$.} 
\label{c2z2fig}
\end{figure} 

Below we give further examples of displaceability of torus orbits in
open subsets of $\C^N$.  Embedding such open subsets in singular
symplectic quotients turns out to by quite useful for resolving
displaceability.  We use the same technique to partially answer a
question of McDuff, by giving an example of a compact toric orbifold
with an open subset of non-displaceable fibers.

We thank M. S. Borman, D. McDuff, and K. Fukaya for helpful comments.

\section{Displaceability in orbifolds}

In this section we review some basic facts about Hamiltonian
displaceability in orbifolds.  Recall that an {\em orbifold} is a
Hausdorff second-countable topological space $X$ equipped with an
equivalence class of {\em orbifold structures}: a smooth proper
\'etale groupoid $\ti{X}$ together with a homeomorphism from the space
of isomorphism classes of objects in $\ti{X}$ to $X$, see
e.g. Adem-Klaus \cite{ad:lec}.  For any orbifold $X$ and element $x
\in X$, we denote by $\Aut(x)$ the group of automorphisms of any
object $\ti{x}$ in $\ti{X}$ mapping to $x$, independent up to
isomorphism of the choice of orbifold structure and choice of
$\ti{x}$.  Denote by $X^{\orb} = \{ x \in X | \# \Aut(x) > 1 \} $ the
subset of $X$ consisting of points with more than one automorphism and
by $X^{\mfd} = X \ssm X^{\orb} $ the locus of points with only the
identity morphism.  Thus $X^{\mfd}$ is a smooth manifold and admits
an open embedding into $X$.

Orbifolds typically arise as quotients of smooth manifolds by locally
free actions of compact groups.  The quotient $Y/G$ of a smooth
manifold $Y$ by a compact group $G$ has a canonical orbifold
structure, given by taking local slices for the action.  A $G$-space
$Y$ together with an orbifold equivalence $Y/G \to X$ is called a {\em
  global quotient presentation} of $X$.  If the generic automorphism
group of an orbifold $X$ is trivial, so that $X^{\mfd}$ is non-empty,
then $X$ admits a global quotient presentation, namely the orthogonal
frame bundle of $X$ by the action of the orthogonal group.  The notion
of an action of a Lie group on an orbifold $X$ is somewhat complicated
in general because of the various notions of an action of a Lie group
on a category, see e.g. \cite{lm:sdm}.  In this paper all group
actions will arise from global presentations, that is, from a
$G$-equivariant action on $Y$ where $X = Y/G$.  The notion of
symplectic form and Hamiltonian action have natural extensions to the
orbifold case, which are somewhat simpler in the globally presented
case: a symplectic form on a globally presented orbifold $X = Y/G$ is
a closed $G$-basic form on $Y$ that is non-degenerate on the normal
bundles to the $G$-orbits.

\begin{definition}  Let $X$ be a symplectic orbifold.  
A subset $L \subset X$ is {\em Hamiltonian displaceable} iff there
exists a function $H \in C^\infty_c([0,1] \times X)$ with time $t$
Hamiltonian flow $\phi_{H,t}: X \to X$ such that $\phi_{H,t}(L) \cap L
= \emptyset$ for some $t$.
\end{definition} 

We collect a few elementary properties of displaceability in the following. 

\begin{proposition} \label{disp}
\begin{enumerate} 
\item Suppose that $X_1 \subset X_2$ is an open set and $L \subset
  X_1$.  If $L$ is displaceable in $X_1$, then $L$ is displaceable in
  $X_2$.
\item Suppose that either $L_1 \subset X_1$ or $L_2 \subset X_2$ are
  displaceable.  Then $L_1 \times L_2$ is displaceable in $X_1 \times
  X_2$.
\item Suppose that $X$ is a Hamiltonian $G$-orbifold and $X \qu G$ its
  symplectic quotient.  Then $L \subset X \qu G$ is displaceable iff
  the inverse image of $L$ in $X$ is displaceable by the flow of a
  $G$-invariant time-dependent Hamiltonian. \label{dispquot}
\item Suppose that $L_1,L_2 \subset X$ are disjoint subsets such that
  $L_1$ is displaceable by a flow $\phi_{t,H}$ with $\phi_{t,H}(L_2) =
  L_2$ for all $t \in [0,1]$.  Then $L_1$ is displaceable by a flow
  $\phi_{t,H_2}$ equal to the identity on an open neighborhood of $L_2$ for
  all $t \in [0,1]$.
\end{enumerate} 
\end{proposition}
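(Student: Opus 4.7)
The plan is to prove the four parts separately, handling (a) and (b) by elementary extension and cutoff arguments, (c) by a slice-theoretic extension combined with the standard tangency of $G$-invariant Hamiltonian vector fields to moment-map level sets, and (d) by a careful cutoff localized away from $L_2$. For (a), I would extend the displacing Hamiltonian $H \in C^\infty_c([0,1] \times X_1)$ by zero across $X_2 \ssm X_1$: compact support inside $X_1$ keeps the extension smooth, and its flow agrees with $\phi_{H,t}$ on $X_1$ while being the identity elsewhere. For (b), assuming without loss that $L_1$ is displaced by $H_1$, choose $\chi \in C^\infty_c(X_2)$ equal to $1$ on a compact neighborhood of $L_2$ and set $H(t,x_1,x_2) = H_1(t,x_1)\chi(x_2)$; on the slab where $\chi \equiv 1$ the Hamiltonian vector field splits as a product, so $\phi_{H,t}(L_1 \times L_2) = \phi_{H_1,t}(L_1) \times L_2$, which is disjoint from $L_1 \times L_2$.

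For (c), the key observation is that for any $G$-invariant Hamiltonian $\tilde H$ on $X$, the Poisson bracket $\{\tilde H, \lan \Psi, \xi \ran\}$ equals the derivative of $\tilde H$ along the infinitesimal generator $\xi_X$ and therefore vanishes; consequently the Hamiltonian vector field of $\tilde H$ is tangent to $\Psi^{-1}(0)$, and its $G$-equivariant flow descends to a Hamiltonian flow on $X \qu G$ generated by the induced function. This immediately yields the reverse direction. For the forward direction, I would pull back a displacing Hamiltonian on $X \qu G$ to $\Psi^{-1}(0)$, extend $G$-equivariantly to a neighborhood in $X$ via a $G$-invariant tubular neighborhood from the symplectic slice theorem, and multiply by a $G$-invariant cutoff to obtain compact support. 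In the orbifold setting all of this is performed on the global presentations stipulated by the paper, and the descent of flows is automatic once tangency has been verified.

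Part (d) is the most subtle and I expect it to be the main obstacle. Because $\phi_{t,H}$ preserves $L_2$, uniqueness of integral curves guarantees that no trajectory starting at a point of $L_1$ ever meets $L_2$, so the compact orbit set $K = \phi_{[0,1]}(L_1)$ is disjoint from $L_2$ (assuming, as is always the case for Lagrangian moment fibers, that $L_1$ is compact). Choose disjoint open neighborhoods $V \supset K$ and $U \supset L_2$, together with a smooth $\chi: X \to [0,1]$ equal to $1$ on $V$ and to $0$ on a smaller open $U' \subset U$ around $L_2$. Setting $H_2 = \chi H$, one has $\iota_{X_{H_2}}\omega = \chi\, dH + H\, d\chi$, so throughout $V$ where $\chi \equiv 1$ and $d\chi \equiv 0$ the Hamiltonian vector field of $H_2$ agrees with that of $H$. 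Uniqueness of integral curves then forces $\phi_{t,H_2}|_{L_1} = \phi_{t,H}|_{L_1}$ for all $t \in [0,1]$, since the latter orbit stays inside $K \subset V$; hence $H_2$ still displaces $L_1$, while $H_2 \equiv 0$ on $U'$ makes $\phi_{t,H_2}$ the identity on this neighborhood of $L_2$. The delicate point is producing the separating open set $V$ that actually contains the entire $H$-orbit of $L_1$ — this uses compactness of $K$, continuity of the flow on $[0,1] \times L_1$, and, in the orbifold case, a small amount of care with the global presentation.
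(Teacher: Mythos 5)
Your proposal follows essentially the same route as the paper's proof: extension by zero for (a), pullback under the projection for (b), invariant extension of the pulled-back Hamiltonian for (c), and a cutoff equal to $1$ on the flow-out of $L_1$ and $0$ near $L_2$ for (d). The extra details you supply (the compactly supported cutoff $\chi$ on the second factor in (b), and the verification in (d) that the flow-out $K=\phi_{[0,1]}(L_1)$ is compact and disjoint from $L_2$ so that the truncated flow still agrees with the original one along $L_1$) are correct refinements of the argument the paper leaves implicit.
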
 

\begin{proof}   (a) If $H_1 \in C^\infty_c(X_1)$ displaces $L_1$ in $X_1$, 
then the extension of $H_1$ by zero to $C^\infty_c(X_2)$ displaces
$L_1$ in $X_2$.  (b) Suppose without loss of generality that $H_1$
displaces $L_1$.  Then $\pi_1^* H_1$ displaces $L_1 \times L_2$ where
$\pi_1: X_1 \times X_2 \to X_1$ is the projection.  (c) If $H$
displaces $L$ then any invariant extension of $\pi^* H$, where $\pi:
\Phinv(0) \to X \qu G$, to $X$, displaces the inverse image of $L$.
The converse is similar.  (d) If $\phi_{t,H}$ displaces $L_1$ and maps
$L_2$ to itself, then let $\rho \in C^\infty(X)$ be a function equal
to $1$ on an open neighborhood of the image of $L_1$ under the flow
$\phi_{H,t}$, and zero on an open neighborhood of $L_2$.  Then the
flow of $\rho H$ displaces $L_1$ and is equal to the identity on a
neighborhood of $L_2$.
\end{proof} 

\section{Displaceability of toric moment fibers} 

We consider the following
class of possibly non-compact Hamiltonian torus actions on orbifolds.
Let $T$ be a torus and $\t_\Z = \exp^{-1}(1)$ the integral lattice.

\begin{definition} \label{opendef}   $X$ is an {\em open symplectic toric orbifold} 
for $T$ if $X$ is a connected Hamiltonian $T$-orbifold with moment map
$\Phi: X \to \t^\dual$ satisfying the following conditions:
\begin{enumerate}
\item $\Phi(X)$ is a defined by a finite set of affine linear
  inequalities defined by vectors in $\t_\Z$ and strict affine linear
  inequalities defined by vectors in $\t$;
\item $\Phi: X \to \Phi(X)$ is proper;
\item the $T$-action is generically free;
\item $\dim(T) = \dim(X)/2$.
\end{enumerate}
 \end{definition}

\begin{remark}  By item (a) the image $\Phi(X)$ is given by 
\begin{equation} \label{defined} 
 \Phi(X) = \left\{ \lambda \in \t^\dual \left| \begin{array}{ll} \lan
   \lambda,v_i \ran \ge \eps_i & i = 1,\ldots, k, \\ \lan \lambda,v_i
   \ran > \eps_i & i = k+1,\ldots, N \end{array} \right. \right\}
 \end{equation}
for some vectors $v_i \in \t_\Z, i = 1,\ldots k$ and $v_i \in \t, i =
k+1,\ldots, N$.  By items (b),(c) and the results of \cite{le:ha}, the
stabilizer of any point in the inverse image $\Phinv(F)$ of an open
facet $F$ is isomorphic to the cyclic group $\Z_{n(F)}$ for some
integer $n(F) \ge 1$.  We assume that $v_i$ is normalized to be the
$n(F_i)$-th multiple of the primitive lattice vector pointing inward
from the facet $F_i$ corresponding to $v_i$.  In this way, in the
compact case the vectors $v_i$ are the data used in the {\em weighted
  fan} classification of toric orbifolds in \cite{le:ha},
\cite{bcs:tdms}.
\end{remark} 

\begin{example} $X = \C^n$ itself is an open symplectic toric manifold
with symplectic form $-2 \sum_{i=1}^n \d q_j \wedge d p_j $ where $z_j
= q_j + i p_j$ and moment map $\Phi: X \to \t^\dual \cong \R^n,
(z_1,\ldots,z_n) \mapsto (|z_1|^2,\ldots, |z_n|^2)$.  The vectors
$v_1,\ldots, v_n$ are the standard basis vectors.
\end{example} 

\begin{example}  Any open subset of $\C^n$ defined by 
$ a_1 |z_1|^2 + \ldots + a_n |z_n|^2 < 1$ for some $ a_1,\ldots,a_n \in
  \Q $
is an open symplectic toric manifold obtained from a weighted
projective space by removing a divisor at infinity. 
\end{example} 

The following well-known lemma indicates how to read off the
automorphism group $\Aut(x)$ of a point $x \in X$ from the facets of
$\Phi(X)$ containing $\Phi(x)$.

\begin{lemma}  (see e.g. 
\cite[Lemma 6.2]{le:ha}) Let $X$ be an open symplectic toric orbifold,
$x \in X$, and $I(x) = \{ i | \lan \Phi(x), v_i \ran = \eps_i \}$ the
indices of normal vectors of facets containing $\Phi(x)$.  Then
$$ \Aut(x) \cong \on{ker} \left( U(1)^{\#I(x)} \to T, \exp\left( \sum_{i
  \in I(x)} c_i e_i\right) \mapsto \exp\left(\sum_{i \in I(x)} c_i v_i\right)
\right) .$$
\end{lemma}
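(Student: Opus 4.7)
The plan is to apply the equivariant local normal form (symplectic slice theorem) for Hamiltonian torus actions on orbifolds and then identify the resulting local model with the symplectic quotient presentation implicit in the statement. Pick a point $x \in X$ with image $\lambda = \Phi(x) \in \t^\dual$. By the Marle--Guillemin--Sternberg normal form applied to a groupoid presentation $\tilde{X}$, a $T$-invariant neighborhood of the orbit $T \cdot x$ is equivariantly symplectomorphic to a local model built from the isotropy representation on the symplectic slice at $x$. The dimension hypothesis $\dim T = \dim X/2$, together with the genericity hypothesis (c) and the proper moment map hypothesis (b) in Definition \ref{opendef}, forces this slice representation to split as a sum of $1$-dimensional complex weight spaces, one for each facet $F_i$ of $\Phi(X)$ containing $\lambda$, together with a trivial (real) factor tangent to the polytope. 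The nonzero weights are, up to the normalization fixed in the remark after Definition \ref{opendef}, exactly the vectors $v_i$ for $i \in I(x)$.

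Next, write down the local model explicitly. Let $\tilde{T} = U(1)^{\#I(x)}$ with standard basis $\{e_i\}_{i \in I(x)}$ acting linearly on $\C^{\#I(x)}$ in the standard toric fashion, and let
\[
  \pi : \tilde{T} \to T, \qquad \exp\!\Bigl(\sum_{i \in I(x)} c_i e_i\Bigr) \mapsto \exp\!\Bigl(\sum_{i \in I(x)} c_i v_i\Bigr).
\]
Set $K = \ker(\pi)$. The subgroup $K \subset \tilde{T}$ acts on $\C^{\#I(x)}$ by restriction, and the quotient orbifold $\C^{\#I(x)}/K$, crossed with a free $T/\pi(\tilde T)$-factor accounting for directions tangent to $\Phi(X)$ at $\lambda$, is the local $T$-equivariant model for $X$ near $x$, with $x$ corresponding to the origin of $\C^{\#I(x)}$. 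This matches the classification of symplectic toric orbifolds of Lerman--Tolman \cite{le:ha}; the normalization $v_i = n(F_i)\hat v_i$ is precisely what ensures that the weighted fan data in the local model reproduce the prescribed stabilizer groups $\Z_{n(F_i)}$ along the codimension-one strata.

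Finally, read off $\Aut(x)$. In the groupoid presentation of the orbifold quotient $\C^{\#I(x)}/K$, the automorphism group of the origin is its $K$-stabilizer. Since the origin is fixed by all of $\tilde T$, a fortiori it is fixed by $K \subset \tilde T$, so its stabilizer in $K$ is all of $K$. The free $T/\pi(\tilde T)$-factor contributes no further automorphisms. Hence $\Aut(x) \cong K = \ker(\pi)$, which is the description in the statement.

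The principal obstacle is the correct identification of the slice model with the symplectic quotient presentation: one must verify that the weights appearing in the slice are precisely the vectors $v_i$ for $i \in I(x)$, with the right integral multiplicities, and that no stray automorphisms arise from the complementary free torus directions. The first point rests on the normalization of $v_i$ fixed in the remark following Definition \ref{opendef}, and the second follows from the fact that $T$ acts freely on the inverse image of an interior point of any face of $\Phi(X)$ containing $\lambda$.
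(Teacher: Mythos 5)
The paper does not actually prove this lemma: it is stated as well known and attributed to Lerman--Tolman \cite[Lemma 6.2]{le:ha}, so there is no in-paper argument to compare against. Your proof --- equivariant local normal form for the Hamiltonian $T$-action on a groupoid presentation, identification of the slice weights with the normal vectors $v_i$, $i \in I(x)$, using the normalization fixed after Definition \ref{opendef}, and then reading off the automorphism group of the origin in the model $\C^{\#I(x)}/K$ with $K = \ker\bigl(U(1)^{\#I(x)} \to T\bigr)$ --- is the standard argument and is essentially the one carried out in \cite{le:ha}; it is sound, and you correctly isolate the one nontrivial step (matching the facets through $\Phi(x)$ with the weight spaces of the slice, which follows because the moment image of the local model is the product of the cone dual to the weights with an affine subspace). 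The only cosmetic imprecision is that the complementary factor in the local model should be $T^*(T/\pi(\tilde{T}))$, accounting for both the free orbit directions and the directions along the open face of $\Phi(X)$ containing $\Phi(x)$, rather than merely a group factor; since this factor is a free $T$-space it contributes no automorphisms, so the conclusion is unaffected.
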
 

In particular since $\Aut(x)$ is finite this lemma implies that
$\Phi(X)$ is a {\em simple} polytope, that is, the normal vectors at
any point are linearly independent.

\begin{example} \label{c2z21}
 Let $X$ be the quotient of $Y = \C^2$ by the diagonal action of $\Z_2
 = \{ \pm 1\}$ given by scalar multiplication. The action of $T' =
 U(1)^2$ on $\C^2$ descends to a generically free action of $T =
 T'/\Z_2$ on $X$.  The integral lattice $\t_\Z $ is the inverse image
 of $\Z_2$ under the exponential map for $T'$, hence $\t_\Z$ is
 generated by $(1/2,1/2), (1/2,-1/2)$.  We identify $\t \to \R^2$ by
 $(\xi_1,\xi_2) \mapsto (\xi_1 + \xi_2, \xi_1 - \xi_2)$ so the
 integral lattice becomes the standard one.  The moment map for the
 $T$ action on $X$ is then
$ \Phi: X \to \R^2, \quad (z_1,z_2) \to (|z_1|^2 + |z_2|^2, |z_1|^2 -
|z_2|^2) .$
The integral vectors are $(-1,1),(1,1)$.  The automorphism group
$$\Aut(0) = \on{ker}(U(1)^2 \to U(1)^2, (z_1,z_2) \mapsto
(z_1z_2,z_1z_2^{-1})) = \Z_2 .$$
\end{example} 

\begin{theorem}   Any open symplectic toric orbifold can be obtained
by symplectic reduction by an open subset of $T^* U(1)^k \times \C^l$
for some $k,l$ by a subtorus of $ U(1)^{k+l}$.
\end{theorem}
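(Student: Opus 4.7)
The plan is a non-compact, orbifold variant of Delzant's construction handling the closed inequalities, followed by an open restriction to impose the strict ones. With the notation of (\ref{defined}), let $V = \on{span}_\R(v_1,\ldots,v_k) \subset \t$, which is a rational subspace since each $v_j \in \t_\Z$. Set $a = \dim \t - \dim V$, and choose integral lifts $u_1,\ldots,u_a \in \t_\Z$ of a $\Z$-basis of the free part of $\t_\Z/(V \cap \t_\Z)$. Form the surjective rational linear map
\begin{equation*}
\pi : \R^{a+k} \to \t, \quad e_i \mapsto u_i \ (i \le a), \quad e_{a+j} \mapsto v_j \ (j \le k).
\end{equation*}
The kernel $\k = \ker \pi$ is rational, so the induced homomorphism $U(1)^{a+k} \to T$ has closed kernel $K$ whose identity component is a subtorus; any finite component group of $K$ is absorbed into the orbifold isotropy and reflects the non-primitivity of the scaled normals $v_j = n(F_j) v_j^{\on{prim}}$.

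Let $Y_0 = T^* U(1)^a \times \C^k$ with its standard Hamiltonian $U(1)^{a+k}$-action, whose moment map $\mu_0 : Y_0 \to (\R^{a+k})^\dual$ has image $\R^a \times [0,\infty)^k$. Reduce at the level $\xi \in \k^\dual$ induced from $-\sum_{j=1}^k \eps_j e_{a+j}^*$. By the weighted orbifold Delzant (Lerman--Tolman) classification \cite{le:ha}, $X_0 := Y_0 \qu_\xi K$ is the symplectic toric orbifold for $T = U(1)^{a+k}/K$ with moment polytope
\begin{equation*}
\bar P = \{\lambda \in \t^\dual : \lan \lambda, v_j\ran \ge \eps_j,\ j = 1,\ldots, k\},
\end{equation*}
and with the prescribed facet isotropies $\Z_{n(F_j)}$.

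To impose the strict inequalities, choose any lifts $\tilde v_i \in \R^{a+k}$ of $v_i \in \t$ for $i = k+1,\ldots,N$ and a lift $\xi' \in (\R^{a+k})^\dual$ of $\xi$, and set
\begin{equation*}
Y := \{y \in Y_0 : \lan \mu_0(y) - \xi', \tilde v_i\ran > \eps_i \text{ for } i = k+1,\ldots,N\} \subset Y_0.
\end{equation*}
This subset is open and $K$-invariant because $\mu_0$ is $K$-invariant and the conditions are linear in $\mu_0$. On the level set $\mu_K^{-1}(\xi)$ each pairing coincides with the evaluation of $v_i$ against the $T$-moment map of $X_0$ and is therefore independent of the lift $\tilde v_i$, so $Y \qu_\xi K$ is precisely the $T$-invariant open subset of $X_0$ lying over $\Phi(X)$. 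The uniqueness half of the orbifold Delzant correspondence then identifies this reduction equivariantly with $X$. The main obstacle is the Lerman--Tolman identification of $X_0$ in Step~2 in the non-compact, possibly non-minimal setting — some $v_j$ may not define actual facets of $\bar P$, and unbounded directions are contributed by the $T^* U(1)^a$ factors — which one verifies by covering $X_0$ with standard toric charts and applying the local Delzant model.
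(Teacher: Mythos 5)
Your proposal is correct and follows essentially the same strategy as the paper: build a model with the prescribed (labeled, partly open) moment polytope as a symplectic reduction of an open subset of $T^* U(1)^a \times \C^k$, then invoke the Lerman--Tolman/Lerman--Karshon uniqueness theorem to identify the model with $X$ as a Hamiltonian $T$-orbifold. The paper phrases the construction step as symplectic cutting (which is the same iterated reduction in different bookkeeping) and leaves all details to \cite{le:nc} and \cite{wo:gdisk}, so your version is simply a more explicit rendering of the same argument.
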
 

\begin{proof} The compact case is consequence of the results of 
\cite{le:nc} as discussed in \cite{wo:gdisk}: Given an open symplectic
toric orbifold $X$ the symplectic cutting construction constructs a
symplectic toric orbifold $X'$ with the same moment polytope as $X$.
The uniqueness result of \cite{le:nc} implies that $X$ is isomorphic
to $X'$ as a Hamiltonian $T$-orbifold.
\end{proof} 

We wish to understand the Hamiltonian displaceability of {\em toric
  moment fibers}:

\begin{definition}  Let $X$ be an open symplectic toric orbifold.  
A {\em toric moment fiber} is a Lagrangian torus given as a fiber
$L_\lambda = \Phinv(\lambda)$ for some $\lambda \in \on{int}(\Phi(X))$.
\end{definition}

Denote by $ND(X) \subset \on{int}(\Phi(X))$ resp. $D(X)$ the set of
points corresponding to non-displaceable resp. displaceable toric
moment fibers.

\begin{example} (Moser)  Let $X$ be the unit disk 
with moment polytope $[0,1)$.  Then $D(X) = [0,1/2)$ and $ND(X) =
    [1/2,1)$.  For Moser \cite{mo:vol} shows that the only invariant
      of a symplectic surface is its area.  Hence a circle $L$ in the
      disk $X$ encloses less than half the area iff $L$ is
      displaceable in $X$.  Similarly, if $X = \P^1$ with moment
      polytope $\Phi(X) = [-1,1]$ then $ND(X) = \{ 0 \}$.
\end{example} 

\begin{example}\label{probes} (McDuff \cite{mc:di})  Let $X$ be a compact symplectic toric 
orbifold with moment map $\Phi$, and let $F$ be an open facet of
$\Phi(X)$ such that $\Phinv(F) \subset X^{\mfd}$.  Let $v \in
\t^\dual_\Z$ be a vector such that $v$ can be completed to a lattice
basis by vectors parallel to $F$.  If $\lambda_0 \in F$ and $\lambda$
lies less than half-way along $(\lambda_0 + \R_{\ge 0 }v ) \cap
\Phi(X)$, then $\Phinv(\lambda)$ is displaceable.  For let $T_0
\subset T$ be the torus whose Lie algebra is the annihilator of $v$.
Moser's argument shows that $\Phinv(\lambda)/T_0$ is displaceable in
$X \qu T_0$, and then \eqref{dispquot} of Proposition \ref{disp}
implies that $\Phinv(\lambda)$ is displaceable in $X$.  See
Abreu-Borman-McDuff \cite{abreu:ext} for improvements on this method.
\end{example} 

Naive application of Theorem \ref{main} (that is, without spurious
inequalities) does not come close to resolving the questions of
non-displaceability of toric fibers even for simple examples and after
including bulk deformations in \cite{fooo:toric2}.  For example, for a
weighted projective space the naive method gives a single
non-displaceable fiber over $\lambda = (5/3,5/3)$, while McDuff's
method shows displaceability for only some of the other fibers. See
Example \ref{p135more} below.

\section{Potentials for varying quotients} 

As explained in the introduction, open symplectic toric manifolds have
various realizations as symplectic quotients, some singular, and the
quasimap Floer cohomology for each realization can give additional
information about displaceability.  We combine the potentials for the
different compactifications into a potential involving infinitely many
variables as follows.

\begin{definition}  An affine linear function
$\ell: \t^\dual \to \R$ is {\em semipositive} on $\Phi(X)$ iff $\ell$
  is positive on $\Phi(X^{\mfd})$ and non-negative on
  $\Phi(X^{\orb})$.
\end{definition} 

\begin{remark}  Any affine linear function $\ell$ on $\t^\dual$ is given by
$ \ell(\lambda) = \lan v, \lambda \ran - \eps $ for some $v \in \t, \eps \in \R$.  If the function corresponding to
$\lambda,\eps$ is semipositive then so is the function corresponding
to $\lambda, \eps'$ for any $\eps' \leq \eps$.
\end{remark} 

\begin{example}  Let $X = \P(1,1,2)$ denote the weighted
projective plane with moment map the convex hull of $(0,0), (1,0)$ and
$(0,2)$, with the orbifold singularity with automorphism group $\Z_2$
mapping to $(1,0)$.  Then the linear function $ \lan ( -1,0), \cdot
\ran - \eps$ is semipositive for $\eps \leq -1$, while the linear
function $ \lan (0,-1), \cdot \ran - \eps$ is semipositive for $\eps <
-2$.

\end{example} 

\begin{definition} 
Denote by $C(\t_\Z,\ol{\R})_+$ the set of maps $\eps: \t_\Z \to \R
\cup \{ - \infty \}$ such that 
\begin{enumerate}
\item only finitely many values of $\eps$ are
finite;
\item if $v \in \t_\Z$ defines a facet of $\Phi(X)$ in the sense of
  \eqref{defined} then 
$\eps(v) = \min_{\lambda \in \Phi(X)} \lan v, \lambda \ran ;$
\item if $v \in \t_\Z$ does not define a facet then $\lan v, \cdot
  \ran - \eps(v)$ is semipositive on $\Phi(X)$.
\end{enumerate} 
\end{definition} 

\begin{definition} 
The {\em potential} for
$ \lambda \in \on{int}(\Phi(X)), \ \eps \in C(\t_\Z,\ol{\R})_+,
\ \delta \in C(\t_\Z,\Lambda_0)$
is the function
$$ W_{\lambda,\eps,\delta}: H^1(T,\Lambda_0) \to \Lambda_0, \quad b
\mapsto 
\sum_{v
  \in \t_\Z} q^{ \lan v,\lambda \ran - \eps(v)}e^{ \lan v,b \ran - \delta(v)} $$
where by convention $q^\infty = 0 $.  
\end{definition}  

\begin{example}[Symplectic balls] \label{balls}   
Let $X = \{z \in \C^n| \sum_{i=1}^n |z_i|^2 < 1 \}$ be the unit ball in
$\C^n$.  Consider the coweight $v = (-1,\ldots, -1)$ and let $\eps(v')
= - c$ if $v = v'$, $\eps(e_i)=0$ for all $1\leq i \leq n$ (where $e_i$
denotes the standard basis vector) and $\eps(v') = -\infty$ otherwise,
and $\delta(e_i) = 0 $.  Then
$$ W_{\lambda,\eps,\delta}(b) = \sum_{i=1}^n q^{\lambda_i} e^{b_i} +
q^{-\lambda_1 - \ldots - \lambda_n + c} e^{-b_1 - \ldots - b_n} $$
for $c \ge 1$. 
\end{example} 

\begin{theorem} \label{nondisp}  Suppose that $\lambda \in \on{int}(\Phi(X))$
is such that for some $\eps,\delta$, $W_{\lambda,\eps,\delta}$ has a
critical point.  Then $\Phinv(\lambda) \subset X$ is non-displaceable.
 \end{theorem}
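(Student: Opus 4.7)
The plan is to realize $W_{\lambda,\eps,\delta}$ as the bulk-deformed potential of an enlarged open symplectic toric orbifold $\hat{X}$, obtained from $X$ by finitely many spurious symplectic cuts, apply Theorem \ref{main} to $\hat{X}$, and transfer the resulting non-displaceability back to $X$ by the cutoff argument used in the $\C^2/\Z_2$ example.

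Let $v_1',\ldots,v_m' \in \t_\Z$ enumerate the finitely many coweights with $\eps(v_j')$ finite that do not already define a facet of $\Phi(X)$. By the semipositivity hypothesis, each $\ell_j(\mu) := \lan v_j', \mu\ran - \eps(v_j')$ is strictly positive on $\Phi(X^{\mfd})$ and non-negative on $\Phi(X^{\orb})$. I would perform symplectic cutting of $X$ successively along each pair $(v_j',\eps(v_j'))$; since $\ell_j \ge 0$ on $\Phi(X)$ the moment polytope is unchanged, $\Phi(\hat{X}) = \Phi(X)$, and the cuts only add new orbifold structure along $\ell_j^{-1}(0) \subset \Phi(X^{\orb})$. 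By the theorem above realizing open symplectic toric orbifolds as symplectic reductions, $\hat{X}$ then admits a global quotient presentation $\hat{X} = Y \qu G$.

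The moment polytope of $\hat{X}$ is cut out by the inequalities indexed by $v_1,\ldots,v_N$ together with $v_1',\ldots,v_m'$, with constants read off from $\eps$. Inspecting \eqref{hv}, the bulk-deformed potential for the presentation $\hat{X} = Y \qu G$, with bulk parameter given by $\delta$ on the relevant coweights, matches $W_{\lambda,\eps,\delta}$ term by term: summands corresponding to coweights with $\eps(v) = -\infty$ contribute $q^\infty = 0$, while the remaining summands are precisely those of \eqref{hv}. A critical point of $W_{\lambda,\eps,\delta}$ then yields, by Theorem \ref{main}, non-displaceability of $L_\lambda$ in $\hat{X}$.

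It remains to transfer this back to $X$. Away from the extra orbifold locus $S := \hat{X}^{\orb} \setminus X^{\orb}$, the orbifold $\hat{X}$ is canonically isomorphic to $X$ as a Hamiltonian $T$-orbifold, and $L_\lambda$ sits in this common open subset. Suppose $H \in C^\infty_c([0,1] \times X)$ displaces $L_\lambda$ in $X$. Exactly as in the $\C^2/\Z_2$ example, $S$ is pointwise fixed by the Hamiltonian flow of any orbifold-smooth function on $\hat{X}$, because orbifold smoothness at such a point forces the Hamiltonian vector field to vanish on the fixed-point stratum of the local stabilizer. Applying Proposition \ref{disp}(d) with $L_1 = L_\lambda$ and $L_2$ a $T$-invariant tubular neighborhood of $S$, I multiply $H$ by a cutoff function to produce $H'$ vanishing near $S$ and still displacing $L_\lambda$, and then extend $H'$ by zero across $S$ to a smooth Hamiltonian on $\hat{X}$ displacing $L_\lambda$ there, contradicting the previous paragraph. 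The main obstacle is making this cutoff step rigorous: applying Proposition \ref{disp}(d) cleanly requires $L_2$ to be preserved by the flow of $H$, which plausibly requires first replacing $H$ by its $T$-average (using that $L_\lambda$ is a single $T$-orbit, so $T$-averaged Hamiltonians still displace it) and then carefully controlling the flow near the possibly intricate singular locus $S$.
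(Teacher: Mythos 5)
Your strategy coincides with the paper's: realize $W_{\lambda,\eps,\delta}$ as the potential of a presentation involving spurious inequalities, run the quasimap argument upstairs, and transfer non-displaceability back to $X$ by a cutoff supported away from the singular locus. Two points of comparison on the details. First, the paper does not pass through an intermediate orbifold $\hat X$ built by successive cuts: it simply takes $G = \ker(U(1)^N \to T)$ for the full list of coweights $v$ with $\eps(v) \neq -\infty$ and works with $Y \qu G$ directly, explicitly allowing its singular set to be \emph{worse} than orbifold; the only property needed is that, by semipositivity, the extra singularities sit over $\Phi(X^{\orb})$ and hence inside $X^{\orb}$. Second, the obstacle you flag at the end is not actually there, and neither $T$-averaging nor Proposition \ref{disp}(d) is needed. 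The flow of any Hamiltonian $H$ on $X$ preserves $X^{\orb}$ as a set (automorphism groups are invariants of orbifold diffeomorphisms), so the flow-out $\phi_H(L_\lambda) = \cup_{t \in [0,1]} \phi_{H,t}(L_\lambda)$ is a compact set disjoint from the closed set $X^{\orb}$. One chooses $\rho$ equal to $1$ on a neighborhood of the \emph{entire} flow-out and supported in $X^{\mfd}$; since $X_{\rho H} = X_H$ where $\rho \equiv 1$, the flow of $\rho H$ agrees with that of $H$ along trajectories emanating from $L_\lambda$, so $\rho H$ still displaces $L_\lambda$, vanishes near all singularities of $Y \qu G$, and lifts to a $G$-invariant Hamiltonian on $Y$, contradicting the upstairs non-displaceability. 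This is exactly the cutoff you wanted, obtained without needing the extra singular stratum $S$ itself to be flow-invariant or to admit an invariant tubular neighborhood.
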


Before we give the proof, we present several examples showing
how this Theorem improves on that of \cite{wo:gdisk}. 

\begin{example}[Symplectic balls continued]  Continuing
Example \ref{balls}, $W_{\lambda,\eps,\delta}$ has a critical point
iff $\lambda = (c,\ldots,c)/(n+1)$ for $c \ge 1$ iff $\Phinv(\lambda)$
is non-displaceable which is well-known from the works of Cho
\cite{cho:hol} and Entov-Polterovich \cite{entov:rigid}.  McDuff's
method implies that the remaining toric fibers are displaceable.  See
Figure \ref{ball}.
\end{example} 

\begin{figure}[ht]
\includegraphics[height=1in]{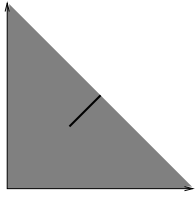}
\caption{Displaceable and non-displaceable fibers for the symplectic
  $4$-ball.}
\label{ball}
\end{figure} 

\begin{example} [A weighted projective plane with a measure zero
set of non-displaceable fibers] Suppose that $X = \P(1,1,2)$ is the
  weighted projective plane with moment polytope $(0,0),(1,0),(0,2)$.
  We write
$$ \Phi(X) = \{ (\lambda_1,\lambda_2) | \lambda_1 \ge 0, \lambda_2 \ge
0, 2\lambda_1 + \lambda_2 \leq 2, \lambda_1 \leq 1 \} .$$
The corresponding potential is 
$$ W_{\lambda,\eps,\delta}(b) = q^{\lambda_1} e^{b_1} +
q^{\lambda_2}e^{b_2} + q^{-2 \lambda_1 - \lambda_2 + 2} e^{-2b_1 -
  b_2} + q^{- \lambda_1 - \eps} e^{-b_1 - \delta_1} .$$
For $\eps = 1$ we obtain a critical point iff $\lambda = (1,0) +
\zeta(-1,1)$ where $\zeta \leq 1/2$.  See Figure \ref{p112}.
\end{example} 

\begin{figure}[ht]
\includegraphics[height=1.7in]{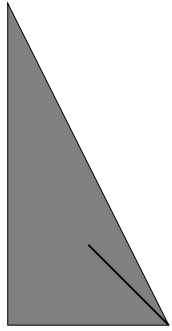}
\caption{Displaceable and non-displaceable fibers for $\P(1,1,2)$.}
\label{p112}
\end{figure} 

\begin{example} [A symplectic ellipsoid] Suppose that 
$$X =  \{ (z_1,z_2) \in \C^2 |  |z_1|^2 + |z_2|^2/2 < 1 \}.$$ 
We write
$$ \Phi(X) = \{ (\lambda_1,\lambda_2) | \lambda_1 \ge 0, \lambda_2 \ge
0, 2\lambda_1 + \lambda_2  <  2, \lambda_1 < 1 \} .$$
For $\eps_1, \eps_2 \geq 0$, the corresponding potential is
$$ W_{\lambda,\eps,\delta}(b) = q^{\lambda_1} e^{b_1} +
q^{\lambda_2}e^{b_2} + q^{-2 \lambda_1 - \lambda_2 - \eps_1} e^{-2b_1 -
  b_2 - \delta_1} + q^{- \lambda_1 - \eps_2} e^{-b_1 - \delta_2} .$$
Then $W_{\lambda,\eps,\delta}$ has a critical point for some
$\eps,\delta$ iff $\lambda_1 + \lambda_2 \ge 1, \lambda_1 \ge 1/2$.
Namely $\eps_1 = 2\eps_2$ and $\eps_2 = - \lambda_1 - \lambda_2 $.
Most of the remaining fibers are displaceable by probes, although we
did not manage to resolve the question completely, see Figure
\ref{open12}.
\end{example}

\begin{figure}[ht]
\includegraphics[height=2in]{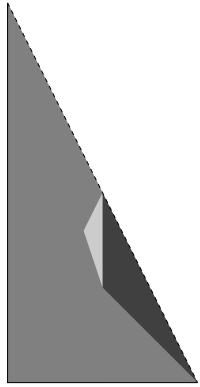}
\caption{Displaceable and non-displaceable fibers for the ellipsoid}
\label{open12}
\end{figure} 

\begin{example} [A weighted projective plane with a positive
measure subset of non-displaceable fibers] 
\label{p135more} 
We show that the toric
  orbifold $X = \P(1,3,5)$ contains an open subset of non-displaceable
  moment fibers.  This partly answers a question of McDuff who asked
  whether there is an example of such an action, presumably thinking
  of the smooth case.  The moment polytope is the convex hull $(0,0),
  (3,0), (0,5)$, and can be defined by the inequalities
$$ \Phi(X) = \{ \lambda \in \R^2, \lambda_1 \ge 0, \lambda_2 \ge 0,
5\lambda_1 + 3 \lambda_2 \leq 15, - \lambda_1 \geq - 3, -2 \lambda_1 -
\lambda_2 \geq -6 \} .$$
Consider the  potential 
\begin{multline}
 W_{\lambda,\eps_1,\eps_2,\delta_1,\delta_2}(b)= q^{\lambda_1} e^{b_1}
 + q^{\lambda_2} e^{b_2} + q^{-5 \lambda_1 - 3 \lambda_2 + 15} e^{-5
   b_1 - 3 b_2} \\ + q^{- \lambda_1 + 3 - \eps_1} e^{-b_1 - \delta_1} +
 q^{-2 \lambda_1 - \lambda_2 + 6 - \eps_2}e^{-2b_1 - b_2 - \delta_2}
 .\end{multline}
\begin{figure}[ht]
\includegraphics[height=3.5in]{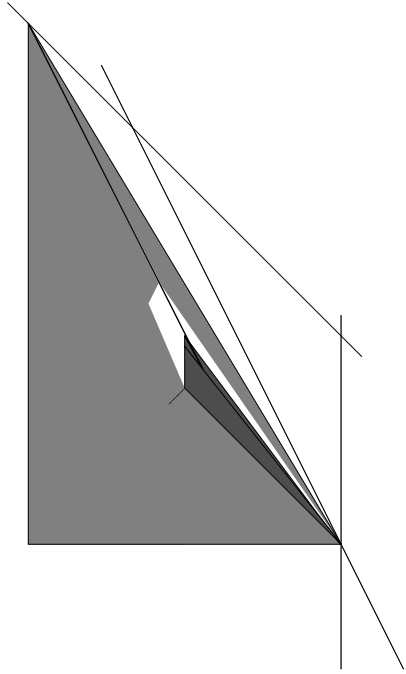}
\caption{More non-displaceable fibers for $\P(1,3,5)$ using spurious
inequalities}
\label{p35more}
\end{figure} 

The potential for $(\lambda_1,\lambda_2) = (3,0)+ c_1(-1,1) +
c_2(-3,4)$ has a critical point if $c_1,c_2 \ge 0$ but $ \langle
(\lambda_1,\lambda_2), (-1,0) \rangle + 3 \leq \langle
(\lambda_1,\lambda_2), (1,0) \rangle$; this is the condition that that
the terms defined by the facets with normal vectors
$(-1,0),(0,1),(-2,-1)$ have leading order terms that of equal order
and lower order than the terms arising from the remaining facets.  As
in the previous examples, this means that the potential arising from
these terms has a non-degenerate critical point.  The additional terms
do not affect the existence of a critical point, by \cite[Lemma
  10.16]{fooo:toric1} (which is a version of the implicit function
theorem for formal functions with values in the Novikov ring).  Note
that \cite[Lemma 10.16]{fooo:toric1} is written for integral polytopes
(polytopes corresponding to smooth toric varieties) but the technique
works equally well for arbitrary potentials, since integrality of the
basis given by the normal vectors at a vertex is never used in the
proof.
%
%
%
It follows that $\P(1,3,5)$ has an open subset of non-displaceable
fibers.  An additional line segment of non-displaceable fibers is
determined by the equality of powers in the leading order of terms
from the facets with normal vectors $(1,0), (-5,-3)$ and a
``spurious'' facet with normal vector $(-1,-1)$.  Additional open
region of non-displaceable torus fibers in $\P(1,3,5)$ are determined
by the leading order terms of (i) the facet with normal vector $(0,1)$
and the spurious facets with normal vectors $(-1,-1)$ and $(-1,0)$,
(ii) the facet with normal vector $(-5,-3)$ and the spurious facets
with normal vectors $(-1,0), (-2,-1)$, which were pointed out to us by
M. S. Borman, see \cite{abreu:ext}.  See Figure \ref{p35more}, where
the regions displaceable by McDuff's probes are shaded in lighter
grey.  Note that the projective line $\P(1,2)$ also has an open subset
of non-displaceable fibers as explained in \cite{wo:gdisk}, but this
is somewhat more expected since displaceability in $\P(1,2)$ is
equivalent to dispaceability in the disk and has singularities in
codimension $2$, not $4$.
 \end{example} 

\begin{proof}[Proof of Theorem \ref{nondisp}]    
First suppose that $X$ is a compact manifold, so that all of the
additional affine linear functions are strictly positive on $\Phi(X)$.
Suppose that $W_{\lambda,\eps,\delta}$ has a critical point for some
$\eps = (\eps(v))$.  Then $X$ is a symplectic quotient of the
representation $Y$ by a torus given as the kernel $G$ of the
homomorphism $U(1)^N \to T$ defined by the matrix formed by the
vectors $v \in \t_\Z$ where $\eps(v) \neq - \infty$.  The theorem then
follows from \cite[Theorem 7.1]{wo:gdisk}.

Next consider the case that $X$ is a compact orbifold.  Suppose that
$W_{\lambda,\eps,\delta}$ has a critical point for some $\eps =
(\eps(v))$.  Let $Y \qu G$ be the symplectic quotient of the
representation $Y$ as in the previous paragraph.  Then $Y \qu G$ is a
Hamiltonian $T$-orbifold on the locus where $G$ acts freely, and the
singular set of $Y \qu G$ (which can be worse than orbifold) is
contained in the singular set of $X$.  The proof of \cite[Theorem
  7.1]{wo:gdisk} shows that there is no $G$-invariant Hamiltonian on
$Y$ displacing the inverse image of $L_\lambda$ in $Y$.  On the other
hand, suppose that $L_\lambda$ is displaceable in $X$ by some
Hamiltonian $H$.  Necessarily, the flow of $H$ preserves $X^{\orb}$,
so if $\phi_H(L_\lambda) := \cup_{t \in [0,1]} \phi_{H,t}(L_\lambda)$
is the flow-out then $\phi_H(L_\lambda)$ is disjoint from $X^{\orb}$.
Choose a cutoff function $\rho \in C^\infty(X)$ such that $\rho$ is
equal to $1$ on an open neighborhood of $\phi_H(L_\lambda)$, and
has support contained in $X^{\mfd}$.  Then the flow of $\rho H$ also
displaces $L_\lambda$.  Since $\rho H$ vanishes in a neighborhood of
the singular set, $\rho H$ lifts to a smooth function on $Y$ which
displaces the inverse image of $L_\lambda$.

Finally consider the case that $X$ is non-compact.  Then $X^{\mfd}$ is
an open subset of the space $Y \qu G$ defined in the previous
paragraph.  Suppose that $L_\lambda$ is displaced by the flow of some
$H \in C^\infty_c([0,1] \times X)$, and $W_{\lambda,\eps,\delta}$ has
a critical point for some $\eps = (\eps(v))$.  Then after choosing a
cutoff function $\rho$ as in the previous paragraph, $\rho H$ lifts to
a smooth invariant function on $Y$ displacing the inverse image of
$L_\lambda$, which is a contradiction.
\end{proof}

\section{Functoriality of the quasimap mirror} 

According to the philosophy of mirror symmetry, the {\em mirror} of a
symplectic orbifold $X$ should be a complex space with potential
function $W: X^\dual \to \Lambda_0$, so that the Fukaya category of
$X$ is equivalent to the derived category of matrix factorizations.
As explained in Fukaya et al \cite{fooo:toric1}, the mirror of a toric
variety is a quantum correction of a potential obtained by Givental
\eqref{hv}.  In this section, we describe the quasimap mirror
construction (which is a somewhat naive version of the mirror but
perhaps more useful for determining displaceability) as a
contravariant functor which behaves well with respect to inclusions,
which clarifies various aspects of the displaceability problem.

\begin{definition}   Let $X$ be a (possibly) open toric orbifold
in the sense of Definition \ref{opendef}.   The {\em quasimap mirror} for $X$
is the space
$$ X^\dual := \t^\dual \times C(\t_\Z,\ol{\R})_+ \times
C(\t_\Z,\Lambda_0) \times H^1(T,\Lambda_0)
$$
equipped with the potential
$W: X^\dual \to \Lambda_0, \quad (\lambda,\eps,\delta,b) \mapsto
W_{\lambda,\eps,\delta}(b) .$
\end{definition} 

Note that the work of Fukaya et al \cite{fooo:toric1},
\cite{fooo:toric2} shows the existence of a particular deformation of
the naive potential which has the properties predicted by mirror
symmetry, such as the correct number of critical points which the
definition above lacks.  However, as we saw in the previous section,
the above formulation is more useful for detecting displaceability.
The quasimap mirror also has good functoriality properties, parallel
to the properties of displaceable fibers listed in Proposition
\ref{disp}.  The following definition will be used in the theorem to
relate the mirror of an action with the mirror for a quotient:

\begin{definition}  For any sub-torus $T_0 \subset T$ define $\pi:
\t \to \t/\t_0$ to be the projection and
\begin{equation} \label{pistar} (\pi_* \eps)(v_0) = \min_{ \pi(v)= v_0} \eps(v),
\quad (\pi_* \delta)(v_0) = \sum_{\pi(v) = v_0, \eps(v) = (\pi_*
  \eps)(v_0)} \delta(v) .\end{equation}
\end{definition}

\begin{theorem}[Functorial properties of quasimap mirrors] \label{mirrorfunc}
\begin{enumerate} 
\item 
  If $X_1 \to X_2$ is an open embedding of toric orbifolds then
  $X_2^\dual$ embeds canonically in $X_1^\dual$.  If $X_1 \to X_2 \to
  X_3$ are open embeddings then $X_3^\dual \to X_1^\dual$ is the
  composition of $X_3^\dual \to X_2^\dual$ and $X_2^\dual \to
  X_1^\dual$.
\item If $X_1,X_2$ are open toric sub-orbifolds of a toric orbifold
  $X$ then $(X_1 \cup X_2)^\dual = X_1^\dual \cap X_2^\dual$ and $(X_1
  \cap X_2)^\dual = X_1^\dual \cup X_2^\dual$.
\item $ (X_1 \times X_2)^\dual = X_1^\dual \times X_2^\dual$.
\item If $T_0 \subset T$ is a subtorus then considering $X \qu T_0$ as
  a toric $T/T_0$ orbifold then the space obtained from $X^\dual$ by
  composition with $H^1(T/T_0,\Lambda_0) \to H^1(T,\Lambda_0)$ and
  composition with $\pi_*$ from \eqref{pistar} embeds into $(X \qu
  T_0)^\dual$.
\end{enumerate} 
\end{theorem}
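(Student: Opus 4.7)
The plan is to prove each item by unpacking the definition of $X^\dual$ and checking that the constraints defining $C(\t_\Z,\ol{\R})_+$ transform contravariantly under the geometric operations on $X$, followed by a direct term-by-term comparison of the potentials. A crucial tool used throughout is the Remark preceding the definition of $C(\t_\Z,\ol{\R})_+$: lowering $\eps(v)$ preserves semipositivity, so facet-equality data on a larger polytope automatically become admissible non-facet data on a smaller polytope.

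For (a), an open embedding $X_1 \hookrightarrow X_2$ gives $\Phi(X_1) \subset \Phi(X_2)$ and $\Phi(X_1^\orb) \subset \Phi(X_2^\orb)$ on the common image. I would send $(\lambda,\eps,\delta,b) \in X_2^\dual$ to the same tuple viewed as data for $X_1$; the verification reduces to noting that semipositivity on $\Phi(X_2)$ restricts to semipositivity on the subset $\Phi(X_1)$, and if $v$ defines a facet of $\Phi(X_2)$ but not of $\Phi(X_1)$ then $\eps(v) = \min_{\Phi(X_2)}\lan v,\cdot\ran$ still satisfies the semipositivity bound on $\Phi(X_1)$. The potential depends only on $(\lambda,\eps,\delta,b)$, so it is preserved, and functoriality of composition is tautological. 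For (b), the identities $\Phi(X_1 \cup X_2) = \Phi(X_1) \cup \Phi(X_2)$ and $\Phi(X_1 \cap X_2) = \Phi(X_1) \cap \Phi(X_2)$, combined with the fact that semipositivity on a union is equivalent to semipositivity on each part, reduce both equalities to a set-theoretic manipulation of the constraint spaces.

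For (c), the identification $\t_\Z = \t_{1,\Z} \oplus \t_{2,\Z}$ together with $\Phi(X_1 \times X_2) = \Phi(X_1) \times \Phi(X_2)$ makes facets of the product polytope exactly facets of one factor crossed with the other polytope, so the data $(\eps,\delta)$ split as direct sums of data for each factor, and the potential decomposes as $W = W_1 + W_2$ under $H^1(T_1 \times T_2) \cong H^1(T_1) \oplus H^1(T_2)$. For (d), the symplectic quotient $X \qu T_0$ is a toric $T/T_0$-orbifold with moment polytope the slice $\Phi(X) \cap (\t/\t_0)^\dual$. Given $(\lambda,\eps,\delta,b)$ with $b \in H^1(T/T_0,\Lambda_0) \subset H^1(T,\Lambda_0)$, the pairing $\lan v, b\ran$ depends only on $v_0 := \pi(v)$, so the terms of $W_{\lambda,\eps,\delta}(b)$ sharing a given $v_0$ group together, and the leading $q$-order $\min_{\pi(v)=v_0}(\lan v,\lambda\ran - \eps(v))$ with the assembled coefficient $\pi_*\delta$ of \eqref{pistar} recovers $W_{\lambda,\pi_*\eps,\pi_*\delta}(b)$. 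The verification that $\pi_*\eps$ lies in $C((\t/\t_0)_\Z,\ol{\R})_+$ amounts to matching facets of the slice polytope with facets of $\Phi(X)$ transverse to $(\t/\t_0)^\dual$, which is built into the definition of $\pi_*$.

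The main obstacle will be the combinatorial bookkeeping around facets that degenerate under the operations: a facet of $X_2$ that ceases to be a facet of $X_1$ in (a), facets of a union that are not facets of either piece in (b), or facets of $\Phi(X)$ that become redundant or split upon slicing in (d). In each case the Remark that $\eps(v)$ may be lowered without losing semipositivity is what makes the contravariance work; without this flexibility the facet-equality constraint would obstruct the desired functoriality.
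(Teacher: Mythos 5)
Your proposal is correct and follows essentially the same route as the paper, whose entire argument is that the statement is immediate from the definition of semipositivity (an $\ell$ semipositive on $\Phi(X_2)$ is automatically semipositive on $\Phi(X_1)$), with the same caveat that (d) is only an embedding because facets of $\Phi(X)$ may fail to define facets of $\Phi(X\qu T_0)$. You simply spell out the case-by-case bookkeeping that the paper leaves implicit.
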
 

The proof is immediate from the definition of semipositivity, in
particular in the setting of (a) if $\ell$ is semipositive on
$\Phi(X_2)$ then $\ell$ is automatically semipositive on $\Phi(X_1)$.
Part (a) says that the quasimap mirror construction gives a
contravariant functor.  Part (d) is only an embedding, because some of
the true facets of $\Phi(X)$ will not define facets of $\Phi(X \qu
T_0)$, so the mirror of $X \qu T_0$ is in general larger than that
obtained from $X$.

The functorial properties of the quasimap mirror construction
translates into the following functorial properties of the
corresponding non-displaceable moment fibers.  Say that $L_\lambda
\subset X$ is {\em (quasimap) Floer non-displaceable} if
$W_{\lambda,\eps,\delta}$ has a critical point for some $\eps,\delta$.
Let $FND(X)$ denote the set of Floer non-displaceable Lagrangians.
The following is a consequence of Theorem \ref{mirrorfunc}:

\begin{corollary}[Functorial properties of Floer non-displaceable sets]
\begin{enumerate} 
\item 
For any open embedding $X_1 \to X_2$, $FND(X_1) \supset FND(X_2)$.
\item If $X_1,X_2$ are open subsets of a toric orbifold $X$ then $
  FND(X_1 \cup X_2) \subset FND(X_1) \cap FND(X_2)$ and 
$FND(X_1 \cap X_2) \supset FND(X_1) \cup FND(X_2)$. 
\item $ FND(X_1 \times X_2) = FND(X_1) \times FND(X_2)$.
\item If $T_0 \subset T$ is a subtorus then $FND(X \qu T_0)$ contains
  the intersection of $FND(X)$ with the fiber over $0$ under the map
  $\t^\dual \to (\t/\t_0)^\dual$.
\end{enumerate} 
\end{corollary}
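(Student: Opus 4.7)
The plan is to derive the corollary as a direct consequence of Theorem \ref{mirrorfunc} by tracking critical points of the potential $W$ along the functorial maps between quasimap mirror spaces. Floer non-displaceability at $\lambda$ is the condition that $W_{\lambda,\eps,\delta}$ admit a critical point in $b$ for some $(\eps,\delta)$, so any map between mirror spaces that fixes $\lambda$ and pulls back $W$ to $W$ will automatically transport Floer non-displaceable fibers. Each of (a)--(d) is of this form once one examines how the maps in Theorem \ref{mirrorfunc} act on the data $(\eps,\delta,b)$.

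For (a) I would take the embedding $X_2^\dual \hookrightarrow X_1^\dual$ from Theorem \ref{mirrorfunc}(a), observe that it is the identity on the $\lambda$ factor and intertwines the potentials, and conclude that a critical point over $\lambda$ in $X_2^\dual$ produces one over the same $\lambda$ in $X_1^\dual$. Part (b) then follows by applying (a) to each step in the chains $X_1 \cap X_2 \hookrightarrow X_i \hookrightarrow X_1 \cup X_2$ for $i=1,2$. For (c), the product decomposition $(X_1 \times X_2)^\dual = X_1^\dual \times X_2^\dual$ in Theorem \ref{mirrorfunc}(c) corresponds to an additive splitting
\begin{equation*}
W_{X_1 \times X_2}(b_1,b_2) \;=\; W_{X_1}(b_1) + W_{X_2}(b_2),
\end{equation*}
because every facet of $\Phi(X_1 \times X_2)$ is a product of a facet of one $\Phi(X_i)$ with the whole of $\Phi(X_j)$, and $H^1(T_1 \times T_2,\Lambda_0)$ splits accordingly. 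The partial derivatives in $b_1$ and $b_2$ therefore decouple, so a critical point of $W_{X_1 \times X_2}$ exists if and only if each $W_{X_i}$ has one.

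For (d) I would invoke the embedding of Theorem \ref{mirrorfunc}(d): if $b \in H^1(T/T_0,\Lambda_0)$ is pulled back to $H^1(T,\Lambda_0)$ and $(\eps,\delta)$ is replaced by $(\pi_*\eps,\pi_*\delta)$ as in \eqref{pistar}, then the potential on $(X \qu T_0)^\dual$ at this point equals the potential $W_{\lambda,\eps,\delta}$ of $X^\dual$. The hypothesis that $\lambda$ lie in the fiber over $0$ of $\t^\dual \to \t_0^\dual$ is precisely what is needed for $\lambda$ to descend to an element of $(\t/\t_0)^\dual$ and serve as a moment-map value for $X \qu T_0$. The one subtle step, and the main obstacle such as it is, lies in (d): one must check that the definition \eqref{pistar} of $\pi_*$ is exactly the rearrangement of summands that identifies the two potentials on the nose. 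This is essentially immediate from how the coweights indexing the potential group together under the projection $\pi\colon \t \to \t/\t_0$, with the minimum in $\pi_*\eps$ picking out the leading $q$-power among the preimages and $\pi_*\delta$ accumulating the matching coefficients.
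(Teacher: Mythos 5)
Your overall strategy --- reading each part of the corollary off the corresponding part of Theorem \ref{mirrorfunc} by transporting critical points of $W$ along the maps of mirror spaces --- is exactly the paper's, which offers nothing beyond the sentence that the corollary ``is a consequence of Theorem \ref{mirrorfunc}.'' Your treatment of (a) and (b) is correct. In (c), note that the additive splitting $W_{X_1\times X_2}(b_1,b_2)=W_{X_1}(b_1)+W_{X_2}(b_2)$ only accounts for coweights of the form $(v_1,0)$ and $(0,v_2)$; the mirror of the product also admits semipositive coweights $(v_1,v_2)$ with both components nonzero (e.g.\ the diagonal coweight for a product of balls), so the splitting gives the inclusion $FND(X_1)\times FND(X_2)\subset FND(X_1\times X_2)$ directly but the reverse inclusion needs the (easy) observation that one may simply not use the non-split coweights. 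This imprecision is inherited from the paper's own statement of Theorem \ref{mirrorfunc}(c).

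The genuine problem is in (d), precisely at the step you single out as ``essentially immediate.'' With $b=\pi^*\bar b$ and $\lambda$ annihilating $\t_0$, grouping the terms of $W^X_{\lambda,\eps,\delta}$ over a fiber $\pi^{-1}(v_0)$ yields
\begin{equation*}
q^{\lan v_0,\lambda\ran}\,e^{\lan v_0,\bar b\ran}\sum_{\pi(v)=v_0} q^{-\eps(v)}e^{-\delta(v)},
\end{equation*}
whereas the corresponding term of $W^{X\qu T_0}_{\bar\lambda,\pi_*\eps,\pi_*\delta}$ carries the factor $q^{-(\pi_*\eps)(v_0)}\prod e^{-\delta(v)}$ (a \emph{product} over the minimizers of $\eps$, since $(\pi_*\delta)(v_0)$ is defined as a sum of the $\delta(v)$). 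Already for two minimizers with $\delta=0$ these are $2q^{-\eps_{\min}}$ versus $q^{-\eps_{\min}}$, so the two potentials are not identified on the nose, and such constants do affect the existence of critical points (cf.\ the $\C^2/\Z_2$ example, where criticality forces $e^{-\delta}=-2$). The conclusion survives because $\delta'$ may be chosen freely downstairs: one should set $e^{-\delta'(v_0)}=\sum_{\pi(v)=v_0} q^{(\pi_*\eps)(v_0)-\eps(v)}e^{-\delta(v)}$, which is an exponential of an element of $\Lambda_0$ provided the leading coefficients do not cancel --- a degenerate case that must be excluded or handled separately. You should also justify that the critical point $b^*$ of $W^X$ can be assumed to lie in the image of $\pi^*$; this is arranged by replacing $\delta(v)$ with $\delta(v)-\lan v,b_0\ran$, where $b_0$ is the component of $b^*$ transverse to $\pi^*H^1(T/T_0,\Lambda_0)$.
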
 

The importance of the last item was emphasized out to us by
Abreu-Macarini \cite{abreu:rem}.

Obviously one would like to know whether one can obtain the
non-displaceable set from a cover.  For example:

\begin{proposition}   Any compact symplectic toric orbifold has a canonical 
open cover indexed by the fixed point set $X^T$, given as follows: for
each $x \in X^T$, let $X(x)$ be the open symplectic toric orbifold
obtained from $X$ by removing all divisors not containing $x$.  Then
$X = \cup_{x \in X^T} X(x)$.
\end{proposition}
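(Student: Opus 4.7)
The plan is to reduce the topological statement about $X$ to a combinatorial claim about the moment polytope $\Phi(X)$, then invoke simplicity of $\Phi(X)$ together with compactness of $X$.

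First I would set up the standard dictionary: the toric divisors $D_1,\dots,D_N$ of $X$ correspond to the facets $F_i \subset \Phi(X)$ via $D_i = \Phi^{-1}(F_i)$, and the $T$-fixed points correspond bijectively to the vertices of $\Phi(X)$. For $x \in X^T$, set $I(x) = \{ i : \lan \Phi(x), v_i \ran = \eps_i \}$, the indices of facets through $\Phi(x)$. Then the open set $X(x)$, obtained by removing every $D_i$ with $\Phi(x) \notin F_i$, equals $\Phi^{-1}(P(x))$ where
\[ P(x) = \{ \lambda \in \t^\dual \mid \lan v_i,\lambda\ran \ge \eps_i \ \text{for } i \in I(x),\ \lan v_i,\lambda\ran > \eps_i \ \text{for } i \notin I(x) \}. \]
This polyhedral description, together with restriction of the ambient $T$-action and moment map, shows $X(x)$ satisfies the four conditions of Definition \ref{opendef}.

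The content of the proposition is then the cover property, which I would reduce to the combinatorial assertion: for every $y \in X$, there exists $x \in X^T$ with $I(y) \subseteq I(x)$, where $I(y) = \{ i : \lan \Phi(y), v_i \ran = \eps_i \}$. The lemma following Definition \ref{opendef} guarantees that $\Phi(X)$ is a simple polytope. In a compact simple $n$-polytope ($n = \dim T$), the face cut out by $I(y)$ has dimension $n - |I(y)|$ and is closed and bounded; if it is not already a vertex, one may choose an additional facet index $j \notin I(y)$ whose corresponding facet meets this face in a proper nonempty subface of one lower dimension (here compactness supplies nonemptiness, and simplicity ensures the normal vectors remain linearly independent). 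Iterating produces a vertex $\Phi(x)$ with $I(y) \subseteq I(x)$. Unwinding the definitions, $y$ then lies in no $D_i$ with $i \notin I(x)$, so $y \in X(x)$, completing the cover.

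The main obstacle is this iterative extension step: compactness is genuinely needed, since in a noncompact simple polytope a face may recede to infinity and never close up on a vertex, in which case the conclusion can fail. Everything else is an unwinding of Definition \ref{opendef} together with the description of $X(x)$ above.
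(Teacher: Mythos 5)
The paper states this proposition without proof, so there is nothing to compare against; judged on its own, your argument is correct and is the natural one. The reduction to the combinatorial claim that every $y \in X$ admits a fixed point $x$ with $I(y) \subseteq I(x)$ is exactly right, as is the translation $y \in X(x) \iff I(y) \subseteq I(x)$ via $D_i = \Phi^{-1}(F_i)$, and your identification of compactness as the essential hypothesis is on target (for the open orbifolds of Definition~\ref{opendef} the face through $\Phi(y)$ can indeed recede to infinity). One small streamlining: rather than iterating the facet-extension step, you can simply observe that the face of $\Phi(X)$ containing $\Phi(y)$ in its relative interior is itself a nonempty compact convex polytope, hence has a vertex $v$; every facet of $\Phi(X)$ containing that face contains $v$, so $I(y) \subseteq I(x)$ for the unique fixed point $x$ with $\Phi(x) = v$ (uniqueness of $x$ over a vertex being part of the Lerman--Tolman classification you are already invoking). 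Your "one lower dimension" claim in the iteration is also slightly stronger than needed and not automatic for an arbitrary $j \notin I(y)$ meeting the face, but since any strict decrease in dimension suffices, this does not affect correctness.
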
 

Because the quasimap mirror construction is contravariant, one cannot
expect to ``recover'' the symplectic topology of a toric orbifold from
the symplectic topology of its canonical cover. Rather, the symplectic
topology of each open subset already ``knows'' about the symplectic
topology of the compactification.  Still one would like to know the
relationship between displaceability in $X$ and displaceability in the
open cover.  The following question is, as far as we know, open:

\begin{question} Is $ND(X) = \cap_{x \in X^T} ND(X(x)), \quad D(X) = \cup_{x \in X^T}
  D(X(x)) ?$
\end{question}

\def\cprime{$'$} \def\cprime{$'$} \def\cprime{$'$} \def\cprime{$'$}
  \def\cprime{$'$} \def\cprime{$'$}
  \def\polhk#1{\setbox0=\hbox{#1}{\ooalign{\hidewidth
  \lower1.5ex\hbox{`}\hidewidth\crcr\unhbox0}}} \def\cprime{$'$}
  \def\cprime{$'$}

\end{document}